\numberwithin{equation}{section}
\numberwithin{figure}{section}
\theoremstyle{definition}
		\newtheorem{theorem}{Theorem}[section]
				\newtheorem{proposition}[theorem]{Proposition}
				\newtheorem{lemma}[theorem]{Lemma}
	            \newtheorem{remark}[theorem]{Remark}
\numberwithin{equation}{section}
\newcommand*{\Wert}{\mathord{\mbox{|\kern-1.5pt|\kern-1.5pt|}}}
\newcommand{\N}{\mathbb{N}}	
\newcommand{\R}{\mathbb{R}}	
\newcommand{\osc}{\mathrm{osc}}
\DeclareMathOperator{\dist}{dist}
\def\XXint#1#2#3{{\setbox0=\hbox{$#1{#2#3}{\int}$}
  \vcenter{\hbox{$#2#3$}}\kern-.5\wd0}}
\begin{document}
	
\title[Boundary Regularity]{Boundary Regularity for an even order elliptic system in the critical dimension} 

\author[M.-L. Liu and Y.-L. Tian]{Ming-Lun Liu and Yao-Lan Tian*}         

\address[Ming-Lun Liu]{Research Center for Mathematics and Interdisciplinary Sciences, Shandong University, Qingdao 266237, P. R. China and Frontiers Science Center for Nonlinear Expectations, Ministry of Education, Qingdao, P. R. China} 
\email{minglunliu2021@163.com}

\address[Yao-Lan Tian]{Center for Optics Research and Engineering, Shandong University, Qingdao 266237, P. R. China}
\email{tianylbnu@126.com}

\thanks{Corresponding author: Yao-Lan Tian.}
\thanks{Both authors are partially supported by the Young Scientist Program of the Ministry of Science and Technology of China (No.~2021YFA1002200), the National Natural Science Foundation of China (No.~12101362) and the Natural Science Foundation of Shandong Province (No.~ZR2022YQ01, ZR2021QA003).}

\begin{abstract}
	In this short note,  we consider the Dirichlet problem associated to an even order elliptic system with antisymmetric first order potential. Given any continuous boundary data, we show that weak solutions are continuous up to boundary.
\end{abstract}

\maketitle

{\small    
	\keywords {\noindent {\bf Keywords:} Polyharmonic maps, higher order elliptic system, Boudary continuity, Dirichlet problem}
	\smallskip
	\newline
	\subjclass{\noindent {\bf 2010 Mathematics Subject Classification:} 35J48, 35B65, 35G35}
}
\bigskip

\section{Introduction}
In this paper, we consider the Dirichlet problem for the following even order elliptic system for $u\in W^{k,2}(\Omega,\R^m)$:
\begin{equation}\label{eq:D-G system}
\Delta^k u = \sum_{l=0}^{k-1}\Delta^l\left\langle V_l,du\right\rangle+ \sum_{l=0}^{k-2}\Delta^l\delta(w_ldu)\quad \text{in } \Omega\subset \R^{2k}
\end{equation}
with the following regularity assumptions on the coefficients: 
\begin{equation}\label{eq:Assumption 1}
	\begin{aligned} & w_{i}\in W^{2i+2-k,2}\left(\Omega,\mathbb{R}^{m\times m}\right)\text{ for }i\in\{0,\ldots,k-2\},\\
		& V_{i}\in W^{2i+1-k,2}\left(\Omega,\mathbb{R}^{m\times m}\otimes\wedge^{1}\mathbb{R}^{2k}\right)\text{ for }i\in\{1,\ldots,k-1\},
	\end{aligned}
\end{equation}
and $$V_{0}=d\eta+F$$ with
\begin{equation}\label{eq:Assumption 2}
	\eta\in W^{2-k,2}\left(\Omega,so(m)\right)\quad \text{  and  }\quad  F\in W^{2-k,\frac{2k}{k+1},1}\left(\Omega,\mathbb{R}^{m\times m}\otimes\wedge^{1}\mathbb{R}^{2k}\right).
\end{equation}

This system was initially introduced by de Longueville and Gastel \cite{deLongueville-Gastel-2019}, aiming at a further extesion of the second order theory by Rivi\`ere \cite{Riviere-2007} (corresponding to the case $k=1$) and the fourth order theory by Lamm-Rivi\`ere \cite{Lamm-Riviere-2008} (corresponding to the case $k=2$), addressing an open problem of Rivi\`ere. It includes the Euler-Lagrange equations of many interesting classes of geometric mappings such as the harmonic mappings, biharmonic mappings, polyharmonic mappings and so on; see \cite{Chang-W-Y-1999,Wang-2004-CPAM,Riviere-2007,Lamm-Riviere-2008,Gastel-Scheven-2009CAG,Guo-Xiang-2019-Higher,GXZ2022}. 

A distinguished feature of this system is the \emph{criticality}. To see it, we consider the simpler case $k=1$. Then system \eqref{eq:D-G system} reduces to the second order Rivi\`ere system
\begin{equation}\label{eq:Riviere system}
	\Delta u=\Omega'\cdot \nabla u,
\end{equation}
where $u\in W^{1,2}(\Omega,\R^m)$ and $\Omega'\in L^2(\Omega,so(m)\otimes \Lambda^1\R^2)$.  The right hand side of \eqref{eq:Riviere system} is merely in $L^1$ by H\"older's inequality and so standard $L^p$ regularity theory for elliptic equations fails to apply here. In the celebrated work \cite{Riviere-2007}, Rivi\`ere succeeded in rewriting \eqref{eq:Riviere system} into an equivalent conservation law, from which the continuity of weak solutions follows. The techniques were further extended to fourth order system in \cite{Lamm-Riviere-2008} and finally to general even order systems in \cite{deLongueville-Gastel-2019}.

In this paper, we shall consider the Dirichlet boundary value problem for \eqref{eq:D-G system}. Recall that we say that $u\in W^{k,2}(\Omega,\R^m)$ has Dirichlet boundary value $g\in C^{k-1}(\overline{\Omega},\R^m)$ if  
\[
\nabla^{\alpha}u=\nabla^\alpha g\quad \text{on }\partial\Omega 
\]
holds in the sense of traces for all $2k$-dimensional multi-indices $\alpha$ with $|\alpha|\leq k-1$. Similarly, we say that $u$ has Navier boundary value $h_{i}\in C(\overline{\Omega},\R^m)$, $i=0,\cdots,k-1$, if for all $i\in \{0,\cdots,k-1\}$
\[
\Delta^i u=h_i \quad \text{on }\partial\Omega. 
\]

Now, we can state our main theorem.
\begin{theorem}\label{thm:main}
Fix $k\in \N$ and $\Omega\subset \R^{2k}$ a bounded smooth domain. Suppose $u\in W^{k,2}(\Omega,\R^m)$ is a solution of \eqref{eq:D-G system} with \eqref{eq:Assumption 1} and \eqref{eq:Assumption 2}. If either the Dirichlet boundary value $g\in C^{k-1}(\overline{\Omega},\R^m)$ or the Navier boundary value $h_i$ for $i=0,\cdots, k-1$, then $u\in C(\overline{\Omega},\R^m)$. 
\end{theorem}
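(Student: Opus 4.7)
The plan is to reduce the boundary continuity statement to the interior continuity theorem of de~Longueville--Gastel \cite{deLongueville-Gastel-2019}. Since continuity is a local property, fix $x_0\in\partial\Omega$ and a boundary-straightening diffeomorphism $\Phi$ mapping $\Omega\cap B_r(x_0)$ to the half-ball $B_1^+\subset\R_+^{2k}$ with $\Phi(\partial\Omega\cap B_r(x_0))\subset\{x_{2k}=0\}$. Pulling back the system, equation \eqref{eq:D-G system} persists in the same form with coefficients still satisfying \eqref{eq:Assumption 1}--\eqref{eq:Assumption 2}, modulo smooth lower order terms absorbable into $V_{k-1}$ and $F$. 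So it suffices to prove continuity at $0$ in the model half-ball geometry.

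The Dirichlet case is the cleaner one. Since $g\in C^{k-1}(\overline{\Omega})$, there is a smooth extension $\tilde g$ to $B_1$. Set $w := u - \tilde g$; then $\nabla^\alpha w$ has vanishing trace on $\{x_{2k}=0\}\cap B_1$ for $|\alpha|\le k-1$, so extending $w$ by zero to $B_1$ gives $\tilde w\in W^{k,2}(B_1,\R^m)$. The substitution $u = \tilde g + w$ produces additional smooth inhomogeneities on the right hand side of \eqref{eq:D-G system} that can be absorbed into $V_{k-1}$ or added to $F$ without violating \eqref{eq:Assumption 1}--\eqref{eq:Assumption 2}. Extending the coefficients arbitrarily (e.g.\ by even reflection) across $\{x_{2k}=0\}$ produces an equation of the form \eqref{eq:D-G system} on $B_1$ satisfied weakly by $\tilde w$, since the lower half contributes nothing while $\tilde w\equiv 0$ there. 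Interior continuity then yields continuity of $\tilde w$, hence of $u$, at $x_0$.

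For the Navier case, iteratively solve $k$ Poisson problems with continuous data to produce $v\in C(\overline{B_1^+},\R^m)\cap W^{k,2}_{\loc}(B_1^+,\R^m)$ with $\Delta^k v = 0$ in $B_1^+$ and $\Delta^i v = h_i$ on the flat boundary for $i=0,\ldots,k-1$. Setting $w := u - v$ gives $\Delta^i w = 0$ on the flat boundary, which (by successive use of $\Delta=\Delta_{x'}+\partial_{2k}^2$ together with the vanishing trace of $w$) forces $\partial_{2k}^{2i} w = 0$ on the flat boundary for $i=0,\ldots,k-1$. Hence the odd reflection $\tilde w(x',x_{2k}) := -w(x',-x_{2k})$ for $x_{2k}<0$ lies in $W^{k,2}(B_1,\R^m)$. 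Extend each $w_l$ by even reflection, split each $V_l$ into tangential and normal parts to be extended respectively by even and odd reflection, and extend $\eta$ in the parity that preserves antisymmetry while allowing $F$ to retain its Lorentz--Sobolev regularity. A direct computation shows that these parity choices are compatible with \eqref{eq:D-G system} and that $\tilde w$ solves the extended system weakly on $B_1$. Interior continuity again gives continuity of $\tilde w$, hence of $u = w+v$, at $x_0$.

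The main obstacle will be the bookkeeping in the Navier step: verifying that the reflected coefficients still satisfy the delicate negative-order Sobolev and Lorentz--Sobolev conditions \eqref{eq:Assumption 1}--\eqref{eq:Assumption 2} (which must be checked by duality against test functions respecting the reflection symmetry), and confirming that no distributional jump appears at $\{x_{2k}=0\}$ when computing $\Delta^l\langle V_l,d\tilde w\rangle$ and $\Delta^l\delta(w_l\,d\tilde w)$ across the interface. The Dirichlet argument sidesteps both issues by the luxury of zero extension; the Navier argument does not, and this is where the proof has to do real work.
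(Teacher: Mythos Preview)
Your approach is genuinely different from the paper's, and it contains a real gap at the very first step.

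\textbf{The gap.} Boundary straightening does not preserve the structure of \eqref{eq:D-G system}. Under a diffeomorphism $\Phi$ the pullback of $\Delta^k$ has principal symbol $(g^{ij}(x)\xi_i\xi_j)^k$, so the difference between the transformed operator and $\Delta^k$ is a genuine $2k$-th order operator in $u$, not ``smooth lower order terms''. Every term on the right-hand side of \eqref{eq:D-G system} has order at most $2k-1$ in $u$ (the top one being $\Delta^{k-1}\langle V_{k-1},du\rangle$), so this principal-part discrepancy cannot be absorbed into $V_{k-1}$ or $F$ as you claim. For $k=1$ one can flatten conformally in $\R^2$ and the obstruction disappears; for $k\ge 2$ in dimension $2k\ge 4$ there is no such rescue, which is exactly why reflection arguments are not used for this system. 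There are secondary problems too: with $g$ only $C^{k-1}$, the term $\Delta^k\tilde g$ is far too singular to serve as an admissible inhomogeneity; in the Navier step the iterated Poisson solution $v$ built from merely continuous $h_i$ need not lie in $W^{k,2}$ up to the flat boundary, so $w=u-v$ may fail to be in $W^{k,2}(B_1^+)$ and the odd reflection is not justified; and your own final paragraph concedes that the distributional jump computation across the interface is unresolved.

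\textbf{What the paper actually does.} The paper avoids reflection entirely and follows Qing and Lamm--Wang. It proves a boundary maximum principle (Proposition~\ref{prop:BMP}): for any $q\in\R^m$,
\[
\max_{\Omega_R(x_0)}|u-q|\le C\Big(\max_{\partial\Omega_R(x_0)}|u-q|+\|u\|_{W^{k,2}(\Omega_{4R}(x_0))}\Big),
\]
obtained by combining the interior H\"older estimate \eqref{eq:interior regularity} with a Courant--Lebesgue slicing lemma on spheres. Taking $q=g(x_0)$ and sending $R\to 0$ gives continuity at each $x_0\in\partial\Omega$, using only the zero-order trace $u=g$ (respectively $u=h_0$) on $\partial\Omega$. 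No flattening, no extension of coefficients across an interface, and no negative-order Sobolev bookkeeping are required.
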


Theorem \ref{thm:main} can be viewed as a natural extension of the corresponding boundary continuity results of M\"uller-Schikorra \cite{Muller-Schikorra-2009} for second order system, and Guo-Xiang \cite{Guo-Xiang-2019-Boundary} for fourth order system.
As a special case of Theorem \ref{thm:main}, we infer that every (extrinsic or intrinsic) polyharmonic mapping from the unit ball $B^{2k}\subset \R^{2k}$ into a closed manifold $N\hookrightarrow \R^m$ is continuous up the boundary, under the Dirichlet boundary value condition. This partially extends the corrosponding boundary continuity reuslt of Lamm-Wang \cite{Lamm-Wang-2009}.

The approach to Theorem \ref{thm:main} is similar to that of Lamm and Wang \cite{Lamm-Wang-2009}, relying on interior H\"older regularity and a boundary maximal principle. As noticed in \cite{Guo-Xiang-2019-Boundary}, this approach only requies zero order boundary assumption ``$u=g$" or ``$u=h_0$" on $\partial\Omega$ for some continuous function $g$ or $h_0$. This observation extends to the general even order system \eqref{eq:D-G system}. 

Our natations are rather standard. We write $B_r(x)$ for a ball centred at $x$ with radius $r$ in $\R^{2k}$. The notation $C$ denotes various constants that may be different from line to line. We sometimes write $A\lesssim B$ meaning that $A\leq CB$ for some constant $C>0$ depending only on the quantitative data.   

\section{The proof of main result}

\subsection{Interior regularity}
Continuity of weak solutions for \eqref{eq:D-G system} was first obtained in \cite{deLongueville-Gastel-2019}. But for boundary continuity, we need the stronger interior H\"older regularity. We first recall the following interior H\"older regularity result for \eqref{eq:D-G system} from \cite[Theorem 1.3]{Guo-Xiang-2019-Higher} or \cite[Theorem 1.1]{GXZ2022}. 
\begin{theorem}[Interior Regularity]\label{thm:interior regularity}
Suppose $u\in W^{k,2}(B^{2k},\R^m)$ is a solution of \eqref{eq:D-G system} with \eqref{eq:Assumption 1} and \eqref{eq:Assumption 2}. Then there exist $\alpha\in (0,1)$, $C>0$ and $r_0$, depending only on $k$, $m$ and the data from \eqref{eq:Assumption 1} and \eqref{eq:Assumption 2}, such that $u$ is locally $\alpha$-H\"older continuous and 
\[
\osc_{B_r(x)}u\leq Cr^{\alpha}\|u\|_{W^{k,2}(B^{2k})}
\]
for all $x\in B_{\frac{1}{4}}(0)$ and all $0<r<r_0$. 
\end{theorem}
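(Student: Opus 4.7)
The plan is to adapt the higher-order gauge-theoretic framework of de Longueville--Gastel \cite{deLongueville-Gastel-2019}, upgrading their mere continuity result to a quantitative H\"older estimate in the spirit of Rivi\`ere's original decay argument for the second order case. The proof has three main ingredients: an Uhlenbeck--Rivi\`ere type gauge construction, the reformulation of \eqref{eq:D-G system} as a conservation law with subcritical remainder, and a Morrey--Campanato iteration yielding H\"older decay.

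First, I would construct a gauge. Since $\eta\in W^{2-k,2}(\Omega, so(m))$ is antisymmetric, one seeks $P\in W^{k,2}(B^{2k}, SO(m))$ minimizing a suitable energy that forces $P$ to ``absorb'' the top-order antisymmetric contribution $d\eta$ of $V_0$. This construction requires a smallness assumption, which is achieved by first rescaling to a ball of sufficiently small radius (producing the constant $r_0$ in the statement). The output is a matrix-valued gauge together with an auxiliary higher-order ``$Q$-field'' generalizing the Lamm--Rivi\`ere construction at $k=2$, whose critical Sobolev norms are controlled by the data from \eqref{eq:Assumption 1} and \eqref{eq:Assumption 2}.

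Next, I would use this gauge to rewrite \eqref{eq:D-G system} in conservation-law form,
\[
\Delta^k(Pu) = \divg(T_1) + T_0,
\]
where $T_1$ is a product of two factors each at the critical Sobolev scale, and $T_0$ collects contributions from the $F$-part of $V_0$ together with the lower-order coefficients $w_i$ and $V_i$ for $i\ge 1$. The key structural fact is that the antisymmetry of $\eta$ produces Jacobian-type cancellations, allowing Coifman--Lions--Meyer--Semmes Hardy space estimates to control $T_1$, while $T_0$ is handled via Lorentz space duality thanks to $F\in W^{2-k,\frac{2k}{k+1},1}$.

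Finally, an elliptic estimate for $\Delta^k$ in Morrey spaces, combined with the above right-hand side control, yields a decay lemma: there exists $\theta<1$ so that on sufficiently small balls the critical quantity $\int_{B_r}|\nabla^k u|^2$ decays geometrically in $r$ modulo subcritical error terms. A standard Campanato iteration then upgrades this to a power-law oscillation bound on $u$ itself, giving the stated H\"older exponent $\alpha$. The principal obstacle is the gauge step: in the higher-order setting one must simultaneously control several coupled fields at different critical Sobolev scales, which is the technical heart of \cite{Guo-Xiang-2019-Higher} and \cite{GXZ2022} and is considerably more delicate than its second-order counterpart.
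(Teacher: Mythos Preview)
The paper does not prove this theorem; it is quoted verbatim as a known result from \cite[Theorem 1.3]{Guo-Xiang-2019-Higher} and \cite[Theorem 1.1]{GXZ2022}, with no argument supplied beyond the citation. Your outline is a faithful high-level summary of the strategy those references actually use (Uhlenbeck--Rivi\`ere type gauge absorbing the antisymmetric part of $V_0$, conservation-law rewriting, Hardy/Lorentz estimates, and a Morrey--Campanato decay iteration), so there is no discrepancy in approach---only that, for the purposes of the present paper, the result is invoked as a black box rather than reproved.
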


We shall use the following version of Theorem \ref{thm:interior regularity} in our later proofs. There exists $R_0>0$ sufficiently small, such that, if $x\in \Omega$ and $0<r<\min\{R_0,\dist(x,\partial \Omega)/4 \}$, then $u\in C^{0,\alpha}(B_r(x_0),\R^m)$ for some $\alpha\in (0,1)$ and
\begin{equation}\label{eq:interior regularity}
	\osc_{B_{\tau r}(x)}u\lesssim \tau^{\alpha}\|u\|_{W^{k,2}(B_{4r}(x_0))}\quad \text{for }0<\tau\leq 1. 
\end{equation}
By \cite[Theorem 1.1]{GXZ2022}, one can indeed infer that \eqref{eq:interior regularity} holds for all $\alpha\in (0,1)$. But for our purpose, the current estimate is sufficient.

\subsection{Boundary Maximum Principle}

Another ingredient for boundary regularity is a boundary maximum principle, originally discovered by Qing \cite{Qing1993} in his proof of boundary regularity for weakly harmonic maps and was later adapted to the polyharmonic case in Lamm-Wang \cite{Lamm-Wang-2009}.

For $x\in \Omega$ and $R>0$, denote by $\Omega_R(x)=\Omega\cap B_R(x)$. We shall prove the following boundary maximal principle for solutions of \eqref{eq:D-G system}.
\begin{proposition}[Boundary Maximum Principle]\label{prop:BMP}
	There exists a constant $C>0$ such that for any $x\in \overline{\Omega}$ and $0<R<R_0/4$, for any $q\in\R^m$, there holds
	\begin{equation}\label{eq:boundary mp}
		\max\limits_{\Omega_R(x)}|u-q|\leq C\left( \max\limits_{\partial \Omega_R(x)}|u-q| + \|u\|_{W^{k,2}(\Omega_{4R}(x))}\right) .
	\end{equation}
\end{proposition}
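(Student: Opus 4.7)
The plan is to argue by contradiction via a blow-up and compactness scheme, in the spirit of Qing~\cite{Qing1993} and Lamm--Wang~\cite{Lamm-Wang-2009} (cf.\ also~\cite{Guo-Xiang-2019-Boundary}). If the estimate fails for every $C>0$, we obtain sequences $u_n$ solving~\eqref{eq:D-G system}, points $x_n\in\overline{\Omega}$, radii $R_n\in(0,R_0/4)$ and vectors $q_n\in\R^m$ with
\[
M_n := \sup_{\Omega_{R_n}(x_n)}|u_n-q_n| > n\Bigl(\sup_{\partial\Omega_{R_n}(x_n)}|u_n-q_n| + \|u_n\|_{W^{k,2}(\Omega_{4R_n}(x_n))}\Bigr).
\]
Since the coefficients $V_l, w_l, \eta, F$ in~\eqref{eq:D-G system} do not involve $u$, the system is linear in $u$ and invariant under the addition of constants. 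Setting $v_n := (u_n-q_n)/M_n$ therefore produces a new solution of~\eqref{eq:D-G system} with the \emph{same} coefficients, satisfying $\sup_{\Omega_{R_n}(x_n)}|v_n|=1$, $\sup_{\partial\Omega_{R_n}(x_n)}|v_n|<1/n$, and $\|d^j v_n\|_{L^2(\Omega_{4R_n}(x_n))}<1/n$ for $1\le j\le k$.

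Pick $y_n$ where $|v_n|$ attains its maximum; for large $n$ it lies in $\Omega\cap B_{R_n}(x_n)$. Put $r_n := \dist(y_n,\partial\Omega_{R_n}(x_n))>0$ and let $z_n$ be a nearest boundary point. We split into two cases.

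\emph{Interior case: $r_n\le \dist(y_n,\partial\Omega)/4$.} A geometric check using $s_n:=R_n-|y_n-x_n|\ge r_n$ yields $B_{4r_n}(y_n)\subset\Omega_{4R_n}(x_n)$. Applying~\eqref{eq:interior regularity} to $v_n$ minus its average over $B_{4r_n}(y_n)$ (so the $W^{k,2}$ norm on the right-hand side is controlled by $\sum_{j=1}^k\|d^j v_n\|_{L^2(B_{4r_n}(y_n))}\lesssim 1/n$ via Poincar\'e) gives $\osc_{B_{r_n}(y_n)}v_n\lesssim 1/n$. Since $z_n\in\overline{B_{r_n}(y_n)}$ with $|v_n(z_n)|<1/n$, this contradicts $|v_n(y_n)|=1$ for large $n$.

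\emph{Boundary case: $r_n>\dist(y_n,\partial\Omega)/4$.} We blow up at the scale $r_n$: set $\hat v_n(z):=v_n(y_n+r_n z)$ on $\hat\Omega_n:=r_n^{-1}(\Omega_{R_n}(x_n)-y_n)\supset B_1(0)$. By the critical scale invariance of~\eqref{eq:D-G system}, $\hat v_n$ solves a rescaled system whose coefficients have the same Sobolev norms on $B_1$ as the original ones on $B_{r_n}(y_n)$; hence the interior regularity of Theorem~\ref{thm:interior regularity} provides uniform H\"older bounds on compact subsets of $\hat\Omega_n$ bounded away from $\partial\hat\Omega_n$. Combined with $|\hat v_n|\le 1$ and Arzel\`a--Ascoli, a subsequence converges locally uniformly to a limit $\hat v_\infty$ on $\hat\Omega_\infty := \lim\hat\Omega_n$, which by smoothness of $\partial\Omega$ intersects $B_1$ in a half-ball. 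The critical scaling
\[
\|d^k\hat v_n\|_{L^2(B_1)} = \|d^k v_n\|_{L^2(B_{r_n}(y_n))} < 1/n
\]
forces $d^k\hat v_\infty\equiv 0$, so $\hat v_\infty$ is a polynomial of degree at most $k-1$ with $|\hat v_\infty|\le 1$ on $\hat\Omega_\infty\cap B_1$, $|\hat v_\infty(0)|=1$, and $\hat v_\infty=0$ on $\partial\hat\Omega_\infty\cap B_1$.

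The main obstacle is the concluding polynomial rigidity step in the boundary case: showing that such a $\hat v_\infty$ must be identically zero. One factors $\hat v_\infty$ by the linear function that vanishes on the flat piece of $\partial\hat\Omega_\infty$ (the tangent hyperplane to $\partial\Omega$) and, by analytic continuation of polynomials restricted to the hemispherical piece (coming from $\partial B_{R_n}(x_n)$), further by $1-|z|^2$; iterating this factorization reduces the degree, and together with the sharp bound $|\hat v_\infty|\le 1$ on $\hat\Omega_\infty\cap B_1$ together with the attained maximum $|\hat v_\infty(0)|=1$ at the origin, forces $\hat v_\infty\equiv 0$, contradicting $|\hat v_\infty(0)|=1$.
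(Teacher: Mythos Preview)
Your interior case is essentially fine, but the boundary case has a genuine circularity that makes the argument collapse. To conclude that $\hat v_\infty=0$ on the piece of $\partial\hat\Omega_\infty$ coming from $\partial\Omega$, you need the rescalings $\hat v_n$ to converge \emph{up to} that boundary piece. The only regularity tool you have is Theorem~\ref{thm:interior regularity}, which yields H\"older bounds on balls $B_\rho(y_n)$ with $\rho<\dist(y_n,\partial\Omega)/4$; in your boundary case $\dist(y_n,\partial\Omega)<4r_n$, so after rescaling you control $\hat v_n$ only on $B_{d_n/4}(0)$ with $d_n=\dist(y_n,\partial\Omega)/r_n\in(0,4)$, and if $d_n\to 0$ you cannot even conclude $|\hat v_\infty(0)|=1$, let alone convergence up to the flat boundary. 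In short, you are invoking boundary regularity to prove boundary regularity. Secondly, the geometry of $\hat\Omega_\infty$ is not what you claim: since $B_{r_n}(y_n)\subset\Omega_{R_n}(x_n)$ always, one has $B_1\subset\hat\Omega_\infty$, so $\hat\Omega_\infty\cap B_1=B_1$ is the full ball, not a half-ball; the spherical piece $\partial B_{R_n}(x_n)$ rescales to a sphere of radius $R_n/r_n\ge 1$ (possibly $\to\infty$), so factoring by $1-|z|^2$ is unjustified, and the ``rigidity'' step does not go through as written.

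The paper's proof avoids all of this by a \emph{direct} argument, with no blow-up or contradiction. One picks a near-maximiser $x_0$, uses interior H\"older regularity to find a small ball $B_{r_1}(x_0)$ on which $|u-q|\ge M/2$, then applies the Courant--Lebesgue lemma (Lemma~\ref{lemma:Courant-Lebesgue}) to find a radius $r_2\in(r_0,2r_0)$ (here $r_0=\dist(x_0,\partial\Omega_R(x))$) with $\osc_{\partial B_{r_2}(x_0)\cap\Omega_R(x)}u\lesssim\|u\|_{W^{k,2}}$, and finally a radial integration of $|\nabla u|$ connects $\partial B_{r_1}$ to $\partial B_{r_2}$ with error $\lesssim\|u\|_{W^{k,2}}$. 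Since $\partial B_{r_2}(x_0)$ meets $\partial\Omega_R(x)$, this chains $M/2$ to $\sup_{\partial\Omega_R(x)}|u-q|+\|u\|_{W^{k,2}}$. The key missing idea in your attempt is this Courant--Lebesgue/radial-integration bridge from the interior to the boundary, which replaces the compactness step you cannot close.
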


To prove Proposition \ref{prop:BMP}, we need the following version of Courant-Lebesgue lemma, which was essentially established in \cite{Lamm-Wang-2009}. Since the formulation is slightly different from there, for the convenience of the readers, we recall the proofs here. 
\begin{lemma}\label{lemma:Courant-Lebesgue}
	There exists $C>0$ such that for any $R>0$ and any $x_0\in\overline{\Omega}$, there exists $R_1\in(R,2R)$ so that
	$$
	\osc_{\partial B_{R_1}(x_0)\cap\Omega}u\leq C\|u\|_{W^{k,2}(\Omega_{4R}(x_0))}.
	$$
\end{lemma}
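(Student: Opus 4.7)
The argument is the higher-order Courant--Lebesgue trick: integrate $|\nabla^k u|^2$ in polar coordinates centred at $x_0$, select a good radius by a mean-value argument, and convert the resulting spherical bound into an oscillation bound via a scale-invariant Sobolev--Poincar\'e estimate on the $(2k-1)$-dimensional surface.

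First, by Fubini in polar coordinates around $x_0$,
\[
\int_R^{2R}\!\!\int_{\partial B_r(x_0)\cap \Omega}|\nabla^k u|^2\,d\sigma\,dr
=\int_{\Omega_{2R}(x_0)\setminus\Omega_R(x_0)}|\nabla^k u|^2\,dx
\leq \|u\|_{W^{k,2}(\Omega_{4R}(x_0))}^2.
\]
The mean-value theorem selects some $R_1\in(R,2R)$ with $\int_{\partial B_{R_1}(x_0)\cap\Omega}|\nabla^k u|^2\,d\sigma\leq CR^{-1}\|u\|_{W^{k,2}(\Omega_{4R}(x_0))}^2$. Only the top-order derivative is extracted at this stage; this is the right thing to do, since lower-order derivatives scale badly under the rescaling to a unit sphere below.

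Next, on $\Sigma:=\partial B_{R_1}(x_0)\cap \Omega$ I would invoke the scale-invariant estimate $\osc_\Sigma u\leq C R_1^{1/2}\|\nabla_\Sigma^k u\|_{L^2(\Sigma)}$. Passing to the unit-sphere parametrisation $v(\omega):=u(x_0+R_1\omega)$ on $\Sigma_1:=(\Sigma-x_0)/R_1\subset S^{2k-1}$, this reads $\osc_{\Sigma_1}v\leq C\|\nabla^k v\|_{L^2(\Sigma_1)}$, which I would prove by combining the Sobolev embedding $W^{k,2}(\Sigma_1)\hookrightarrow C^0(\Sigma_1)$ (valid because $2k>\dim\Sigma_1=2k-1$) with the higher-order Poincar\'e inequality
\[
\inf_{c\in\R^m}\|v-c\|_{W^{k,2}(\Sigma_1)}\leq C\|\nabla^k v\|_{L^2(\Sigma_1)}.
\]
The Poincar\'e step follows from a Rellich-compactness argument together with the fact that $\ker\nabla^k$ on a connected subdomain of the sphere reduces to the constants: iterating, $\nabla^k v=0$ forces $\nabla^{k-1}v$ to be a parallel tensor, and on a positively-curved background all such parallel structures reduce to multiples of tensor powers of the metric, which combined with the appropriate integrability then collapses $v$ to a constant.

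Combining the two ingredients, and using the scaling identity $\|\nabla^k v\|_{L^2(\Sigma_1)}^2=R_1^{2k-(2k-1)}\|\nabla^k u\|_{L^2(\Sigma)}^2=R_1\|\nabla^k u\|_{L^2(\Sigma)}^2$, yields
\[
\osc_\Sigma u\leq CR_1^{1/2}\|\nabla^k u\|_{L^2(\Sigma)}\leq CR^{1/2}\cdot R^{-1/2}\|u\|_{W^{k,2}(\Omega_{4R}(x_0))}=C\|u\|_{W^{k,2}(\Omega_{4R}(x_0))},
\]
as claimed. The main technical obstacle I anticipate is the \emph{uniformity} of the Poincar\'e--Sobolev constant in $x_0\in\overline\Omega$ and $R\in(0,R_0/4)$. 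As $x_0$ moves through $\overline\Omega$, the rescaled $\Sigma_1$ interpolates between the full unit sphere (when $\dist(x_0,\partial\Omega)\gg R$) and a spherical cap close to a hemisphere (when $x_0\in\partial\Omega$). Smoothness of $\partial\Omega$ is exactly what enables uniform bi-Lipschitz comparison with one of these two model cases, so that the Sobolev and Poincar\'e constants remain uniformly bounded over the whole family of configurations; the polar-averaging and rescaling steps are then routine.
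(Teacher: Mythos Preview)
Your route is genuinely different from the paper's, and it has a real gap. The paper does \emph{not} slice with $|\nabla^k u|^2$. Instead it first uses the bulk Sobolev embedding $W^{k,2}(\R^{2k})\hookrightarrow W^{1,2k}(\R^{2k})$ to control $\int_{\Omega_{2R}(x_0)}|\nabla u|^{2k}$, and then runs the \emph{classical} first-order Courant--Lebesgue argument: Fubini selects $R_1\in(R,2R)$ with $R_1\int_{\partial B_{R_1}\cap\Omega}|\nabla_T u|^{2k}\,d\mathcal H^{2k-1}$ under control, and the Morrey embedding $W^{1,2k}\hookrightarrow C^{0,1/(2k)}$ on the $(2k-1)$-sphere gives the oscillation bound. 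Because only first tangential derivatives appear, no higher-order Poincar\'e inequality on spheres and no curvature corrections are needed.

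The gap in your argument is hidden in the ``scaling identity'' $\|\nabla^k v\|_{L^2(\Sigma_1)}^2=R_1\|\nabla^k u\|_{L^2(\Sigma)}^2$. Your Fubini step bounds the \emph{ambient Euclidean} derivative $\nabla^k u$ restricted to $\Sigma=\partial B_{R_1}\cap\Omega$, whereas your Sobolev--Poincar\'e step on $\Sigma_1$ requires the \emph{intrinsic covariant} derivative $\nabla_\Sigma^k(u|_\Sigma)$. For $k\ge 2$ these differ by second-fundamental-form corrections of the schematic form $\sum_{j=1}^{k-1}R_1^{-(k-j)}\nabla^j u$; after squaring and integrating over $\Sigma$ one picks up $R_1^{-2(k-j)}\|\nabla^j u\|_{L^2(\Sigma)}^2$, which is \emph{not} controlled by the top-order Fubini selection and blows up as $R_1\to 0$. (Your own remark that ``lower-order derivatives scale badly under the rescaling'' is exactly the warning sign; the rescaling does not make them disappear, it merely relocates them into the passage from ambient to intrinsic derivatives.) A secondary issue is that your higher-order Poincar\'e inequality $\inf_c\|v-c\|_{W^{k,2}(\Sigma_1)}\le C\|\nabla^k v\|_{L^2(\Sigma_1)}$ on a variable spherical cap, with a uniform constant, rests on the claim that $\ker\nabla^k=\{\text{constants}\}$ on open pieces of $S^{2k-1}$; your parallel-tensor sketch is not a proof for general $k$, and even granting it the uniformity over the family of caps is not addressed. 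The paper's first-order detour through $L^{2k}$ avoids both difficulties entirely.
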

\begin{proof}
	For $x\in B_{2R}(x_0)$, set $r = |x-x_0|\in[0,2R]$. By Fubini's theorem, we have
	\begin{align*}
		\int_{\Omega_{2R}(x_0)}|\nabla u|^{2k}dx \geq & 
		\int_{R}^{2R}dr \int_{\partial B_r(x_0)\cap\Omega}|\nabla_T u|^{2k} d\mathcal{H}^{2k-1} \\
		\geq & \left( \int_{R}^{2R}\frac1rdr\right) \inf\limits_{R\leq r\leq 2R}\left( r\int_{\partial B_r(x_0)\cap\Omega}|\nabla_T u|^{2k} d\mathcal{H}^{2k-1}\right) \\
		=&  \ln2\inf\limits_{R\leq r\leq 2R}\left( r\int_{\partial B_r(x_0)\cap\Omega}|\nabla_T u|^{2k} d\mathcal{H}^{2k-1}\right),
	\end{align*}
	where $\nabla_T$ denotes the gradient operator on $\partial B_r(x_0)$ and $d\mathcal{H}^{2k-1}$ is the volume element on $\partial B_r(x_0)$. Then there exists $R_1\in(R,2R)$ such that 
	$$
	R_1\int_{\partial B_{R_1}(x_0)\cap\Omega}|\nabla_T u|^{2k} dH^{2k-1}\leq \frac{1}{\ln 2} \int_{\Omega_{2R}(x_0)}|\nabla u|^{2k}dx.
	$$
	Hence $u(R_1,\cdot)\in W^{1,2k}(\partial B_{R_1}(x_0)\cap\Omega,\R^m)$ and the Sobolev embedding theorem implies that $u(R_1,\cdot)\in C^{\frac{1}{2k}}(\partial B_{R_1}(x_0)\cap\Omega,\R^m)$ and 
	\begin{align*}
		\osc_{\partial B_{R_1}(x_0)\cap\Omega}u\lesssim  R_1\int_{\partial B_{R_1}(x_0)\cap\Omega}|\nabla_T u|^{2k} dH^{2k-1}\lesssim \|u\|_{W^{k,2}(\Omega_{4R}(x_0))}.
	\end{align*}
\end{proof}

\begin{proof}[Proof of Proposition \ref{prop:BMP}]
	Denote $M = \max\limits_{\Omega_R(x)}|u-q|$, here $q\in\R^m$ is fixed. We may assume that 
	\begin{equation}\label{BMP01}
		M\geq \|u\|_{W^{k,2}(\Omega_{R}(x))}.
	\end{equation}
	Choose $x_0\in \Omega_R(x)$ such that 
	\begin{equation}\label{BMP02}
		|u(x_0)-q|\geq\frac34M.
	\end{equation}
	Let $r_0 = \dist(x_0,\partial\Omega_R(x_0))$. Note that $r_0\leq R\leq R_0$. Thus \eqref{eq:interior regularity} implies that for any $r\in(0,\frac{r_0}{4})$, we have
	\begin{equation}\label{BMP03}
		\osc_{B_{r}(x_0)}u \leq C\left( \frac{r}{r_0}\right) ^{\alpha_0} \|u\|_{W^{k,2}(\Omega_{R}(x))} \leq CM\left( \frac{r}{r_0}\right) ^{\alpha_0}.
	\end{equation}
	Pick $r_1=r_0/(4C)^{1/\alpha}$ in the above, and we obtain
	$$
	\osc_{B_{r_1}(x_0)}u\leq \frac14M
	$$
	This together with \eqref{BMP02} yields
	\begin{equation}\label{BMP04}
		\inf\limits_{B_{r_1}(x_0)}|u-q|\geq  |u(x_0)-q|-\osc_{B_{r_1}(x_0)}u\geq \frac12M.
	\end{equation}
	By Lemma \ref{lemma:Courant-Lebesgue}, there exists $r_2\in(r_0,2r_0)$ such that 
	\begin{equation}\label{BMP05}
		\osc_{\partial B_{r_2}(x_0)\cap\Omega_R(x)}u\leq C\|u\|_{W^{k,2}(\Omega_{4R}(x))}.
	\end{equation}
	
	Note that $\partial B_{r_2}(x_0)\cap\partial\Omega_R(x)\not=\emptyset$. Using polar coordinates centered at $x_0$, we estimate
	\begin{equation*}
		\begin{split}
			\inf\Big\{|u(r_1,\theta)-&u(r_2,\theta)|:(r_i,\theta)\in\partial B_{r_i}(x_0)\cap\Omega_R(x), i=1,2\Big\}\\
			\leq& C\int_{S^{2k-1}}d\theta\int_{r_1}^{r_2} |u_r|\chi_{[r_1,r_2]\times S^{2k-1}}(r,\theta)dr\\
			\leq& \frac{C}{r_1^{2k-1}}\int_{S^{2k-1}}d\theta\int_{r_1}^{r_2} |u_r|\chi_{[r_1,r_2]\times S^{2k-1}}(r,\theta)r^{2k-1}dr\\
			\leq& \frac{C}{r_1^{2k-1}}\int_{B_{2r_0}(x)\cap\Omega_R(x)}|u_r|dx\\
			\leq& \frac{C}{r_1^{2k-1}}|B_{2r_0}(x)|^{\frac{2k-1}{2k}}\left( \int_{\Omega_R(x)}|\nabla u|^{2k}dx\right) ^{\frac{1}{2k}}\\
			\leq& C\frac{r_0^{2k-1}}{r_1^{2k-1}}\|u\|_{W^{k,2}(\Omega_{4R}(x))}
			\leq C\|u\|_{W^{k,2}(\Omega_{4R}(x))}.
		\end{split}
	\end{equation*}
This implies that there exists $\theta_0\in\partial B_1(x_0)$ such that 
	\begin{equation}\label{BMP07}
		|u(r_1,\theta_0)-u(r_2,\theta_0)|\leq C\|u\|_{W^{k,2}(\Omega_{4R}(x))}.
	\end{equation}
	Hence, by choosing an arbitrary $x^*\in \partial B_{r_2}(x_0)\cap\partial\Omega_R(x)$, we obtain from (\ref{BMP04}), (\ref{BMP05}) and (\ref{BMP07}) that
	\begin{align*}
		\frac{M}{2} \leq& \inf\limits_{B_{r_1}(x_0)}|u-q| \leq |u(r_1,\theta_0)-q|\\
		\leq& |u(r_1,\theta_0)-u(r_2,\theta_0)|+|u(r_2,\theta_0)-u(x^*)|+|u(x^*)-q|\\
		\leq& C\|u\|_{W^{k,2}(\Omega_{4R}(x))}+\osc_{\partial B_{r_2}(x_0)\cap\Omega_R(x)}u+\sup\limits_{\partial \Omega_R(x)}|u-q|\\
		\leq& C\left( \sup\limits_{\partial \Omega_R(x)}|u-q| + \|u\|_{W^{k,2}(\Omega_{4R}(x))}\right).
	\end{align*}
	The proof is complete.
\end{proof}

\subsection{Proof of Theorem \ref{thm:main}}
Now we are ready to prove Theorem \ref{thm:main}.

\begin{proof}[Proof of Theorem \ref{thm:main}]
	Let $x_0\in\partial\Omega$ and take $q= g(x_0)= u(x_0)$ in Proposition \ref{prop:BMP}. Note that
	\[
	\begin{aligned}
		\max\limits_{\partial\Omega_{R}(x_0)}|u-u(x_0)|&\leq \max\limits_{\partial\Omega_{R}(x_0)\cap\partial\Omega}|u-u(x_0)|+
	\osc_{\partial\Omega_{R}(x_0)\cap\Omega}u\\
	&=\max\limits_{\partial\Omega_{R}(x_0)\cap\partial\Omega}|g-g(x_0)|+
	\osc_{\partial\Omega_{R}(x_0)\cap\Omega}u.
	\end{aligned}
	\]
	The first term tends to 0 as $R\to 0$ since $g\in C(\partial\Omega)$. The second term tends to 0 as $R\to 0$ by Lemma \ref{lemma:Courant-Lebesgue}. This implies the continuity of $u$ as desired.

%
%
%
\end{proof}

\begin{remark}\label{rmk:on main result}
	The proof above extends to solutions to the following inhomogeneous elliptic system 
	\begin{equation}\label{eq:inhom D-G system}
		\Delta^k u = \sum_{l=0}^{k-1}\Delta^l\left\langle V_l,du\right\rangle+ \sum_{l=0}^{k-2}\Delta^l\delta(w_ldu)+f\quad \text{in } \Omega\subset \R^{2k}
	\end{equation} 
with $f\in L^p$ for some $p>1$ and \eqref{eq:Assumption 1}, \eqref{eq:Assumption 2}. Indeed, by \cite[Theorem 1.1]{GXZ2022}, in this case, the interior regularity estimate \eqref{eq:interior regularity} becomes
\begin{equation}\label{eq:inter regular with f}
	\osc_{B_{\tau r}(x)}u\lesssim \tau^{\alpha}\left(\|u\|_{W^{k,2}(B_{4r}(x_0))}+\|f\|_{L^p(B_{4r}(x_0))}\right)\quad \text{for }0<\tau\leq 1. 
\end{equation}
With this, the buondary maximal principle \eqref{eq:boundary mp} remains valid with an extra term $\|f\|_{L^p(\Omega_{4R}(x))}$ on the right hand side. The proof of Theorem \ref{thm:main} then works with obvious modifications.
\end{remark}

\subsection*{Acknowledgements}

The authors would like to Prof.~Chang-Lin Xiang and Chang-Yu Guo for posing this question to them and for many useful conservations.

\end{document}